\definecolor{Red}{cmyk}{0,1,1,0}
\definecolor{verde}{cmyk}{1,0,1,0}
\definecolor{loka}{cmyk}{.5,0,1,.5}
\definecolor{azul}{cmyk}{1,1,0,0}
\numberwithin{equation}{section}
\newcommand{\be}{\begin{equation}}
\newcommand{\ee}{\end{equation}}
\newtheorem{theorem}{Theorem}
\newtheorem{definition}{Definition}
\begin{document}
\title{Ulam-Hyers stability of a nonlinear fractional Volterra integro-differential equation}
\author{J. Vanterler da C. Sousa$^1$}
\address{$^1$ Department of Applied Mathematics, Institute of Mathematics,
 Statistics and Scientific Computation, University of Campinas --
UNICAMP, rua S\'ergio Buarque de Holanda 651,
13083--859, Campinas SP, Brazil\newline
e-mail: {\itshape \texttt{ra160908@ime.unicamp.br, capelas@ime.unicamp.br }}}

\author{E. Capelas de Oliveira$^1$}

\begin{abstract} Using the $\psi-$Hilfer fractional derivative, we present a study of the Hyers-Ulam-Rassias stability and the Hyers-Ulam stability of the fractional Volterra integral-differential equation by means of fixed-point method.

\vskip.5cm
\noindent
\emph{Keywords}: $\psi-$Hilfer fractional derivative, Hyers-Ulam-Rassias stability, Hyers-Ulam stability, fractional Volterra integro-differential equations, fixed-point method.
\newline 
MSC 2010 subject classifications. 26A33; 45D05, 45GXX.
\end{abstract}
\maketitle


\section{Introduction}
The study of the fractional derivatives in systems of differential equations, has been of great applicability in mathematical models for example, involving problems of population dynamics, erythrocytes sedimentation rate, among others \cite{Ze,RHM}. On the other hand, studying the stability of Hyers-Ulam and Hyers-Ulam-Rassias, for solutions of fractional differential equations has been of great interest \cite{wang,sousa,JinRong,wei}. In addition, propose more general results that involve fractional integro-differential, has also been gaining prominence \cite{wang1,muniya}.

By proposing the study of solution stability via fractional integrals and fractional derivatives, we can generalize the results and obtain the usual ones as particular cases. However, for the variety of derivative and fractional integrals formulations, in this paper we will use two recent fractional operators, that is, of general differentiation and integration \cite{Ze1}.

The motivation for the elaboration of this paper is the study of the stability of Hyers-Ulam-Rassias and Hyers-Ulam of the following fractional nonlinear Volterra integro-differential equation
\begin{equation}  \label{eq1}
\left\{ 
\begin{array}{rcl}
^{H}\mathbb{D}_{0+}^{\alpha ,\beta ;\psi }u\left( t\right) & = & f\left(
t,u\left( t\right) \right) +\displaystyle\int_{0}^{t}k\left( t,s,u\left(
t\right) \right) ds \\ 
I_{0+}^{1-\gamma }u\left( 0\right) & = & \sigma%
\end{array}
\right.
\end{equation}
with $t\in I=[0,T]$, where $f\left( t,u\right)$ is a continuous function with respect to the variables $t$ and $u$ on $I\times \mathbb{R}$, $k\left( t,s,u\right) $ is continuous with respect to $t,$ $s$ and $u$ on $I\times \mathbb{R}\times \mathbb{R}$, $\sigma $ is a given constant, $^{H}\mathbb{D}_{a+}^{\alpha ,\beta ;\psi }\left( \cdot \right) $ (Eq.(\ref{eq3}), below) where $0<\alpha<1$, $0\leq \beta \leq 1$ and $I_{0+}^{1-\gamma }\left( \cdot \right) $ is $\psi-$Riemann-Liouville fractional integral where $0\leq \gamma <1$ \cite{Ze1}.

The paper is organized as follows: in section 2, we present some important preliminary definitions and result for the development: the $\psi-$Hilfer fractional derivative and the stability definitions of Hyers-Ulam-Rassias and Hyers-Ulam. In section 3, we present and discuss the main results of the paper, the stabilities of Hyers-Ulam-Rassias and Hyers-Ulam. 

\section{Preliminaries}
In this section, we present the $\psi-$Hilfer fractional derivative. In this sense, we also present the definition of stability of Hyers-Ulam-Rassias and Hyers-Ulam by means of the $\psi-$Hilfer fractional derivative and important results for the study of the fractional nonlinear differential equation Eq.(\ref{eq1}) and its stability.

Let $0<\alpha<1$, $I=[0,T]$ be a finite or infinite interval, $f$ an integrable function defined on $I$ and $\psi\in C^{1}(I)$ an increasing function such that $\psi ^{\prime }(t)\neq 0$, for all $t\in I$. The right-sided $\psi-$Hilfer fractional derivative, are defined by \cite{Ze1}
\begin{equation}  \label{eq3}
^{H}\mathbb{D}_{0+}^{\alpha ,\beta ;\psi }f\left( t\right) =I_{0+}^{\beta \left( 1-\alpha \right) ;\psi }\left( \frac{1}{\psi ^{\prime }\left(t\right)  }\frac{d}{dt}\right)I_{0+}^{\left( 1 -\beta \right) \left( 1-\alpha
\right) ;\psi }f\left( t\right).
\end{equation}

The following definition is an adaptation of Definition 1.1 as in \cite{sevgina}.

\begin{definition}
If for each continuously differentiable function $u\left( t\right) $ satisfying 
\begin{equation*}
\left\vert ^{H}\mathbb{D}_{a+}^{\alpha ,\beta ;\psi }u\left( t\right)
-f\left( t,u\left( t\right) \right) -\int_{0}^{t}k\left( t,s,u\left(
t\right) \right) ds\right\vert \leq \Phi \left( t\right) ,
\end{equation*}
where $\Phi \left( t\right) \geq 0$ for all $t,$ there exists a solution $u_{0}\left( t\right) $ of the Volterra integro-differential equation 
\textnormal{Eq.(\ref{eq1})} and a constant $C>0$ with 
\begin{equation*}
\left\vert u\left( t\right) -u_{0}\left( t\right) \right\vert \leq C\text{ }\Phi\left( t\right)
\end{equation*}
for all $t,$ where $C$ is independent of $u\left( t\right) $ and $u_{0}\left( t\right) $, then we say that the \textnormal{Eq.(\ref{eq1})} has the Hyers-Ulam-Rassias stability. If $\Phi \left( t\right) $ is
a constant function in the above inequalities, we say that \textnormal{Eq.(\ref{eq1})} has the Hyers-Ulam stability.
\end{definition}

The next Theorem \ref{teo1}, is very importance for the study of the main
purpose of this paper, the study of the stability of Hyers-Ulam and
Hyers-Ulam-Rassias.

\begin{theorem}\label{teo1} Let $\left( X,d\right) $ be a generalized complete metric space. Assume that $\Omega :X\rightarrow X$ a strictly contractive operator with Lipschitz constant $L<1.$ If there exists a nonnegative integer $k$ such that $d\left( \Omega ^{k+1}x,\Omega ^{k}x\right) <\infty $ for some $x\in X$, then following
\begin{enumerate}
\item The sequence $\left\{ \Omega ^{n}x\right\} $ converges to a fixed point $ x^{\ast }$ of $\Omega$;

\item $x^{\ast }$ is the unique fixed point of $\Omega $ in $X^{\ast }=\left\{y\in X/\text{ }d\left( \Omega ^{n}x,y\right) <\infty \right\}$;

\item If $y\in X^{\ast },$ then $d\left( y,X^{\ast }\right) \leq \dfrac{1}{1-L} d\left( \Omega y,y\right)$.
\end{enumerate}
\end{theorem}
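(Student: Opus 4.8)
The plan is to establish the three assertions by the classical Diaz--Margolis argument, taking care throughout that $d$ is a \emph{generalized} metric which may assume the value $+\infty$; the single hypothesis $d(\Omega^{k+1}x,\Omega^{k}x)<\infty$ is exactly what is needed to launch a convergent geometric estimate. First I would fix the base point $x_{0}:=\Omega^{k}x$ and consider its orbit $x_{n}:=\Omega^{n}x_{0}=\Omega^{n+k}x$. Strict contractivity gives, by induction,
\begin{equation*}
d(x_{n+1},x_{n})=d(\Omega x_{n},\Omega x_{n-1})\le L\,d(x_{n},x_{n-1})\le L^{n}\,d(x_{1},x_{0}),
\end{equation*}
where $d(x_{1},x_{0})=d(\Omega^{k+1}x,\Omega^{k}x)<\infty$ by hypothesis. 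Then for $m>n$ the triangle inequality and the geometric series yield
\begin{equation*}
d(x_{m},x_{n})\le\sum_{j=n}^{m-1}d(x_{j+1},x_{j})\le\sum_{j=n}^{m-1}L^{j}\,d(x_{1},x_{0})\le\frac{L^{n}}{1-L}\,d(x_{1},x_{0}),
\end{equation*}
which tends to $0$ as $n\to\infty$ because $L<1$. Hence $\{x_{n}\}$ is Cauchy and, by completeness, converges to some $x^{*}\in X$.

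Since a strict contraction is Lipschitz, hence continuous, passing to the limit in $x_{n+1}=\Omega x_{n}$ gives $\Omega x^{*}=x^{*}$; as $\{\Omega^{n}x\}$ differs from $\{x_{n}\}$ only by the finite index shift $k$, it converges to the same limit, proving assertion (1). For (2), I would first observe that $x^{*}\in X^{*}$, since $d(x_{n},x^{*})\to 0$ forces $d(\Omega^{n}x,x^{*})<\infty$ for large $n$. If $y\in X^{*}$ is any fixed point, then $d(\Omega^{n}x,x^{*})<\infty$ together with $d(\Omega^{n}x,y)<\infty$ gives, via the triangle inequality, $d(x^{*},y)<\infty$; the contraction then yields $d(x^{*},y)=d(\Omega x^{*},\Omega y)\le L\,d(x^{*},y)$, and since $L<1$ this forces $d(x^{*},y)=0$, i.e.\ $y=x^{*}$.

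For (3), which I read as the bound $d(y,x^{*})\le\frac{1}{1-L}d(\Omega y,y)$, take $y\in X^{*}$; the inequality is trivial if $d(\Omega y,y)=\infty$, so I assume it is finite. The same geometric estimate applied to the orbit of $y$ shows that $\{\Omega^{n}y\}$ is Cauchy with limit a fixed point lying in $X^{*}$, which must be $x^{*}$ by (2). Telescoping gives
\begin{equation*}
d(y,\Omega^{n}y)\le\sum_{j=0}^{n-1}d(\Omega^{j}y,\Omega^{j+1}y)\le\sum_{j=0}^{n-1}L^{j}\,d(\Omega y,y)\le\frac{1}{1-L}\,d(\Omega y,y),
\end{equation*}
while $d(\Omega^{n}y,x^{*})\to 0$. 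Combining these through $d(y,x^{*})\le d(y,\Omega^{n}y)+d(\Omega^{n}y,x^{*})$ and letting $n\to\infty$ produces the stated bound.

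The only genuine subtlety, and where I expect the main care to be needed, is the bookkeeping forced by the extended-valued metric: each application of the triangle inequality must be justified by first confirming that the intermediate distances are finite, and one must verify that the limit $x^{*}$ actually belongs to $X^{*}$ before invoking uniqueness. Once finiteness is tracked correctly, all three conclusions follow from the geometric series estimate together with the continuity of $\Omega$.
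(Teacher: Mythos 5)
Your proof is correct. Note, however, that the paper does not actually prove this statement: it is the Diaz--Margolis fixed point alternative, and the ``proof'' in the paper is only a citation to that reference. Your argument --- the geometric-series Cauchy estimate launched from $\Omega^{k}x$, continuity of the contraction to identify the limit as a fixed point, finiteness bookkeeping for uniqueness in $X^{*}$, and the telescoping bound for the error estimate --- is precisely the standard proof from that source, and you correctly read the misprint $d\left( y,X^{\ast }\right)$ in item (3) as $d\left( y,x^{\ast }\right)$.
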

\begin{proof}
See \cite{diaz}.
\end{proof}


\section{Mains Results}
In this section, we introduce the Lipschitz condition, in order to present and discuss the main result of this paper, that is, the study of the stability of Hyers-Ulam-Rassias and Hyers-Ulam.

First, we introduce the following hypotheses.

(H1) Let $I=[0,T]$ be a given closed and bounded interval, with $T>0,$ and $ M,L_{f}$ and $L_{k}$ be positive constants with $0<ML_{f}+M^{2}L_{k}<1$. Suppose that $f:I\times \mathbb{R} \rightarrow \mathbb{R}$ is a continuous
function which satisfies a Lipschitz condition 
\begin{equation}  \label{eq5}
\left\vert f\left( t,u_{1}\right) -f\left( t,u_{2}\right) \right\vert \leq
L_{f}\left\vert u_{1}-u_{2}\right\vert, \text{ } \forall t\in I, \text{ } \forall u_{1},u_{2}\in \mathbb{R},
\end{equation}
$k:I\times I\times \mathbb{R} \rightarrow \mathbb{R}$ is a continuous function which satisfies a Lipschitz condition 
\begin{equation}  \label{eq6}
\left\vert k\left( t,s,u_{1}\right) -k\left( t,s,u_{2}\right) \right\vert
\leq L_{k}\left\vert u_{1}-u_{2}\right\vert, \text{ } \forall t,s\in I \text{ } \mbox{and}\text{ } \forall u_{1},u_{2}\in \mathbb{R}.
\end{equation}

\begin{theorem}\label{teo2} Assuming the hypotheses \textnormal{(H1)} and $\psi\in C[0,T]$ an increasing function such that $\psi'(t)\neq0$ on $I$. If a continuously differentiable function $u:I\rightarrow  \mathbb{R} $ satisfies
\begin{equation}\label{eq7}
\left\vert ^{H}\mathbb{D}_{0+}^{\alpha ,\beta ;\psi }u\left( t\right) -f\left( t,u\left( t\right) \right) -\int_{0}^{t}k\left( t,s,u\left( s\right) \right) ds\right\vert \leq \Phi \left( t\right) ,\text{ }\forall t\in I
\end{equation}
where $\Phi :I\rightarrow \left( 0,\infty \right)$ is a continuous function with
\begin{equation}\label{eq8}
I_{0+}^{\alpha ;\psi }\Phi \left( t\right) :=\frac{1}{\Gamma \left( \alpha  \right) }\int_{0}^{t}\psi ^{\prime }\left( \xi \right) \left( \psi \left( t\right) -\psi \left( \xi \right) \right) ^{\alpha -1}\Phi \left( \xi
\right) d\xi \leq M\Phi \left( t\right)
\end{equation}
for each $t\in I$, then there exists a unique continuous function $u_{0}:I\rightarrow \mathbb{R}$, such that
\begin{equation}\label{eq9}
u_{0}\left( t\right) =\frac{\left( \psi \left( t\right) -\psi \left(
0\right) \right) ^{\gamma -1}}{\Gamma \left( \gamma \right) }\sigma
+I_{0+}^{\alpha ;\psi }f\left( t,u_{0}\left( t\right) \right)
+I_{0+}^{\alpha ;\psi }\left[ \int_{0}^{\xi }k\left( t,s,u_{0}\left(
s\right) \right) ds\right] 
\end{equation}
with $I_{0+}^{1-\gamma ;\psi }u\left( 0\right) =\sigma ,$ $0<\alpha <1,$ $0\leq \beta \leq 1$ and
\begin{equation}\label{eq10}
\left\vert u\left( t\right) -u_{0}\left( t\right) \right\vert \leq \frac{M}{1-\left( ML_{f}+M^{2}L_{k}\right) }\Phi \left( t\right) ,\text{ }\forall t\in I.
\end{equation}
\end{theorem}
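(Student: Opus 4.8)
The plan is to recast \eqref{eq9} as a fixed-point equation and apply the generalized contraction principle of Theorem \ref{teo1}. First I would fix the space $X=C(I,\R)$ and equip it with the weighted (generalized) metric
$$d(v,w)=\inf\{\,C\in[0,\infty]:\ |v(t)-w(t)|\le C\,\Phi(t),\ \forall t\in I\,\},$$
which takes values in $[0,\infty]$ and makes $(X,d)$ a generalized complete metric space, completeness being inherited from uniform convergence since $\Phi$ is continuous and strictly positive on the compact interval $I$. The solution operator $\Omega:X\to X$ is then the right-hand side of \eqref{eq9},
$$(\Omega v)(t)=\frac{(\psi(t)-\psi(0))^{\gamma-1}}{\Gamma(\gamma)}\,\sigma+I_{0+}^{\alpha;\psi}f(t,v(t))+I_{0+}^{\alpha;\psi}\!\left[\int_0^t k(t,s,v(s))\,ds\right].$$
Before estimating, I would record the equivalence that a continuously differentiable $u$ with $I_{0+}^{1-\gamma;\psi}u(0)=\sigma$ solves \eqref{eq1} if and only if it solves \eqref{eq9}; this follows by applying $I_{0+}^{\alpha;\psi}$ to \eqref{eq1} and using that $I_{0+}^{\alpha;\psi}$ inverts $^{H}\mathbb{D}_{0+}^{\alpha,\beta;\psi}$ up to the initial-value term $\tfrac{(\psi(t)-\psi(0))^{\gamma-1}}{\Gamma(\gamma)}\sigma$. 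Thus fixed points of $\Omega$ are exactly the functions $u_0$ sought in \eqref{eq9}, and checking continuity of $\Omega v$ (including the behaviour at $t=0$ against the weight $(\psi(t)-\psi(0))^{\gamma-1}$) confirms $\Omega(X)\subseteq X$.

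The central step is to show $\Omega$ is strictly contractive with constant $L=ML_f+M^2L_k<1$. Given $v,w\in X$ with $d(v,w)=C$, so $|v(t)-w(t)|\le C\Phi(t)$, the Lipschitz bounds \eqref{eq5}--\eqref{eq6} give
$$|(\Omega v)(t)-(\Omega w)(t)|\le L_f\,I_{0+}^{\alpha;\psi}|v(t)-w(t)|+L_k\,I_{0+}^{\alpha;\psi}\!\left[\int_0^t|v(s)-w(s)|\,ds\right].$$
The first term is controlled directly by \eqref{eq8}, yielding $L_f\,I_{0+}^{\alpha;\psi}(C\Phi)(t)\le ML_f\,C\,\Phi(t)$. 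For the Volterra term I would bound the inner integral by $\int_0^t|v(s)-w(s)|\,ds\le C\int_0^t\Phi(s)\,ds$, control $\int_0^t\Phi(s)\,ds$ by $M\Phi(t)$, and then invoke \eqref{eq8} a second time, producing the factor $M^2$ and the bound $M^2L_k\,C\,\Phi(t)$. Summing gives $d(\Omega v,\Omega w)\le(ML_f+M^2L_k)\,d(v,w)$ with $L<1$ by (H1), so parts (1)--(2) of Theorem \ref{teo1} furnish the unique fixed point $u_0$, establishing the existence and uniqueness asserted in \eqref{eq9}.

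To obtain the stability estimate \eqref{eq10}, I would apply $I_{0+}^{\alpha;\psi}$ to the defect inequality \eqref{eq7}. Using the inversion identity, $u(t)-(\Omega u)(t)=I_{0+}^{\alpha;\psi}\!\big[{}^{H}\mathbb{D}_{0+}^{\alpha,\beta;\psi}u-f(t,u)-\int_0^t k(t,s,u(s))\,ds\big]$, so that by \eqref{eq7} and \eqref{eq8}
$$|u(t)-(\Omega u)(t)|\le I_{0+}^{\alpha;\psi}\Phi(t)\le M\Phi(t),$$
whence $d(\Omega u,u)\le M<\infty$. This finiteness simultaneously verifies the hypothesis of Theorem \ref{teo1} (taking $x=u$, $k=0$) and places $u$ in $X^{\ast}$, so part (3) yields $d(u,u_0)\le\frac{1}{1-L}\,d(\Omega u,u)\le\frac{M}{1-(ML_f+M^2L_k)}$, which is exactly \eqref{eq10}.

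The step I expect to be the main obstacle is the contraction estimate for the Volterra term: extracting the factor $M^2$ hinges on dominating the ordinary inner integral $\int_0^t\Phi(s)\,ds$ by $M\Phi(t)$ before reapplying \eqref{eq8}, and since the kernel $(\psi(t)-\psi(\xi))^{\alpha-1}$ of $I_{0+}^{\alpha;\psi}$ is singular at $\xi=t$ for $0<\alpha<1$, the plain integral is not obviously comparable to the fractional one. Making this chain rigorous under the single hypothesis \eqref{eq8} (rather than assuming a separate bound on $\int_0^t\Phi$) is the delicate point; a secondary technical issue is confirming that the approximate solution $u$ shares the initial condition $I_{0+}^{1-\gamma;\psi}u(0)=\sigma$ so that the inversion identity used in the last paragraph holds with the same boundary term.
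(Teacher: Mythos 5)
Your proposal follows essentially the same route as the paper: the same weighted generalized metric $d(v,w)=\inf\{C:|v-w|\le C\Phi\}$ on $C(I,\mathbb{R})$, the same operator $\Omega$, the same contraction constant $ML_f+M^2L_k$, and the same three-part application of Theorem \ref{teo1} to get existence, uniqueness, and the bound \eqref{eq10}. The step you flag as the main obstacle --- dominating the ordinary integral $\int_0^t\Phi(s)\,ds$ by $M\Phi(t)$ so as to extract the factor $M^2$ from the Volterra term, which does not follow from the hypothesis \eqref{eq8} on the \emph{fractional} integral alone --- is precisely the point the paper itself passes over without justification, so your account is faithful to (and more candid than) the published argument.
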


\begin{proof}

In fact, for $ v,w\in X,$ we set 
\begin{equation}  \label{eq11}
d\left( v,w\right) =\inf \left\{ c\in \left[ 0,\infty \right] /\text{ }\left\vert v\left( t\right) -w\left( t\right) \right\vert \leq C\Phi \left(t\right), \text{ }\forall t\in I\right\},
\end{equation}
where $X$ is the set of all real valued continuous functions on $I$.

Consider the operator $\Omega :X\rightarrow X$, defined by 
\begin{equation}
\Omega v\left( t\right) =\frac{\left( \psi \left( t\right) -\psi \left( 0\right) \right) ^{\gamma -1}}{\Gamma \left( \gamma \right) }\sigma +I_{0+}^{\alpha ;\psi }f\left( t,v_{0}\left( t\right) \right) +I_{0+}^{\alpha ;\psi }\left[ \int_{0}^{\xi }k\left( t,s,v_{0}\left( s\right) \right) ds\right]   \label{eq12}
\end{equation}%
for all $v\in I$.

1. $\Omega $ is strictly contractive on $X$.

Consider the $C_{vw}\in \left[ 0,\infty \right] $ be a constant with $d\left( v,w\right) \leq C_{vw}$ for any $v,w\in X,$ that is by Eq.(\ref{eq11}), we have 
\begin{equation}  \label{eq13}
\left\vert v\left( t\right) -w\left( t\right) \right\vert \leq C_{vw}\Phi \left( t\right) ,\text{ }\forall t\in I.
\end{equation}

So, by Eq.(\ref{eq5}), Eq.(\ref{eq6}). Eq.(\ref{eq8}), Eq.(\ref{eq12}) and Eq.(\ref{eq13}), we can write
\begin{eqnarray}
&&\left\vert \Omega v\left( t\right) -\Omega w\left( t\right) \right\vert  
\notag  \label{palmeiras} \\
&\leq &\frac{1}{\Gamma \left( \alpha \right) }\int_{0}^{t}\psi ^{\prime
}\left( \xi \right) \left( \psi \left( t\right) -\psi \left( \xi \right)
\right) ^{\alpha -1}\left\vert f\left( \xi ,v\left( \xi \right) \right)
-f\left( \xi ,w\left( \xi \right) \right) \right\vert d\xi +  \notag \\
&&+\frac{1}{\Gamma \left( \alpha \right) }\int_{0}^{t}\psi ^{\prime }\left(
\xi \right) \left( \psi \left( t\right) -\psi \left( \xi \right) \right)
^{\alpha -1}\int_{0}^{\xi }\left\vert k\left( t,s,v\left( s\right) \right)
-k\left( t,s,w\left( s\right) \right) \right\vert dsd\xi   \notag \\
&\leq &MC_{vw}L_{f}\Phi \left( t\right) +ML_{k}C_{vw}I_{0+}^{\alpha ;\psi }
\left[ \int_{0}^{\xi }\Phi \left( s\right) ds\right] .
\end{eqnarray}

Also, by inequality Eq.(\ref{palmeiras}), we obtain $\forall t\in I$ that is $d\left( \Omega v,\Omega w\right) \leq C_{vw}\Phi \left( t\right) \left[ ML_{f}+M^{2}L_{k}\right]$. 

Hence, we can conclude that $d\left( \Omega v,\Omega w\right) \leq \left[ ML_{f}+M^{2}L_{k}\right] d\left( v,w\right) $ for any $v,w\in X$, where $0<ML_{f}+M^{2}L_{k}<1$.

Thus, by Eq.(\ref{eq12}), there exists a constant $0<C<\infty $, with 
\begin{eqnarray*}
&&\left\vert \Omega w\left( t\right) -w_{0}\left( t\right) \right\vert  \\
&=&\left\vert \frac{\left( \psi \left( t\right) -\psi \left( 0\right)
\right) ^{\gamma -1}}{\Gamma \left( \gamma \right) }\sigma +I_{0+}^{\alpha
;\psi }f\left( t,w_{0}\left( t\right) \right) +I_{0+}^{\alpha ;\psi }\left[
\int_{0}^{\xi }f\left( t,s,w_{0}\left( s\right) \right) ds\right]
-w_{0}\left( t\right) \right\vert  \\
&\leq &C\Phi \left( t\right) 
\end{eqnarray*}
for arbitrary $w_{0}\in X$, $\forall t\in I$, since $f\left( \xi ,w_{0}\left( \xi \right) \right) $, $ k\left( t,s,w_{0}\left( s\right) \right) $ and $w_{0}\left( t\right) $ are bounded on their domains and $\underset{t\in I}{\min }\Phi \left(   t\right) >0$. Thus, the Eq.(\ref{eq11}) implies that $d\left( \Omega w_{0},w_{0}\right) <\infty $. Also, by Theorem \ref{teo1}.1, there exists a continuous function $u_{0}:I\rightarrow \mathbb{R} $ such that $\Omega ^{n}u_{0}\rightarrow u_{0}$ in $\left( X,d\right)$ and $\Omega u_{0}=u_{0}$, that is, $u_{0\text{ }}$ corresponds to the Eq.(\ref{eq9}) for all $t\in I$.

Since $w$ and $u_{0}$ are bounded on $I$ for any $w\in X$ and $\underset{ t\in I}{\min }\Phi \left( t\right) >0,$ there exists a constant $0<C_{vw}<\infty $ such that 
\begin{equation*}
\left\vert w_{0}\left( t\right) -w\left( t\right) \right\vert \leq C_{w}\Phi
\left( t\right)
\end{equation*}
for any $t\in I.$ We have $d\left( w_{0},w\right) <\infty $ for any $w\in X.$

Therefore, we have that $\left\{ w\in X/\text{ }d\left( w_{0},w\right) <\infty \right\} $ is equal to $X$. So, by Theorem \ref{teo1}.2, we conclude that $u_{0}$ given by Eq.(\ref{eq9}), is the unique continuous function.

From Eq.(\ref{eq7}) and by Theorem 5 \cite{Ze1}, we have 
\begin{eqnarray*}
&&\left\vert u\left( t\right) -\frac{\left( \psi \left( t\right) -\psi
\left( 0\right) \right) ^{\gamma -1}}{\Gamma \left( \gamma \right) }\sigma
-I_{0+}^{\alpha ;\psi }f\left( t,u\left( t\right) \right) -I_{0+}^{\alpha
;\psi }\left[ \int_{0}^{\xi }k\left( t,s,u\left( s\right) \right) ds\right]
\right\vert  \\
&\leq &\frac{1}{\Gamma \left( \alpha \right) }\int_{0}^{t}\psi ^{\prime
}\left( \xi \right) \left( \psi \left( t\right) -\psi \left( \xi \right)
\right) ^{\alpha -1}\Phi \left( \xi \right) d\xi .
\end{eqnarray*}

Then, by Eq.(\ref{eq8}) and Eq.(\ref{eq12}), we have 
\begin{equation*}
\left\vert u\left( t\right) -\Omega u\left( t\right) \right\vert \leq \frac{1%
}{\Gamma \left( \alpha \right) }\int_{0}^{t}\psi ^{\prime }\left( \xi
\right) \left( \psi \left( t\right) -\psi \left( \xi \right) \right)
^{\alpha -1}\Phi \left( \xi \right) d\xi \leq M\Phi \left( t\right) ,\text{ }%
\forall t\in I;
\end{equation*}
which implies
\begin{equation}  \label{eq16}
d\left( u,\Omega u\right) \leq M.
\end{equation}

Again by Theorem \ref{teo1}.3 and Eq.(\ref{eq16}), we conclude that 
\begin{equation*}
d\left( u,u_{0}\right) \leq \frac{1}{1-\left( ML_{f}+M^{2}L_{k}\right) }d\left( \Omega u,u\right) \leq \frac{M}{1-\left( ML_{f}+M^{2}L_{k}\right) },
\end{equation*}
which concludes the proof.

\end{proof}

Its important to note that, using the same hypotheses one can consider, always as a theorem, the stability of Hyers-Ulam-Rassias considering a limited and closed interval \cite{JVC}.

In what follows we introduce and prove a theorem associated with the stability involving Eq.(\ref{eq9}) a fractional Volterra integral-differential equation.

\begin{theorem} Let $0<\alpha<1$, $0\leq\beta\leq 1$ and $\psi\in C^{1}[0,T]$ an increasing function such that $\psi'(t)\neq 0$ for all $t\in I$. Also $L_{f}$ and $L_{k}$ be positive constants with $0<TL_{f}+\frac{{T}^{2}}{2}L_{k}<1$ and $I=\left[ 0,T\right] $ denote a given closed and bounded interval, with $T>0$. Suppose that $f:I\times \mathbb{R}\rightarrow \mathbb{R}$ is a continuous function which satisfies a Lipschitz condition \textnormal{Eq.(\ref{eq5})} and $k:I\times I\times \mathbb{R}\rightarrow \mathbb{R} $ is a continuous function which satisfies a Lipschitz condition \textnormal{Eq.(\ref{eq8})}. If for $\varepsilon \geq 0$ a continuously differentiable function $u:I\rightarrow \mathbb{R}$ satisfies
\begin{equation*}
^{H}\mathbb{D}_{0+}^{\alpha ,\beta ;\psi }u\left( t\right) -f\left( t,u\left( t\right) \right) -\int_{0}^{t}k\left( t,s,u\left( s\right) \right) ds\leq \varepsilon ,\text{ }\forall t\in I
\end{equation*}
then there exists a unique continuous function $u_{0}:I\rightarrow \mathbb{R}$ satisfying \textnormal{Eq.(\ref{eq9})} and
\begin{equation}\label{jose}
\left\vert u\left( t\right) -u_{0}\left( t\right) \right\vert \leq \frac{%
\left( \psi \left( T\right) -\psi \left( 0\right) \right) ^{\alpha
}\varepsilon }{\Gamma \left( \alpha +1\right) -\left( \psi \left( T\right)
-\psi \left( 0\right) \right) ^{\alpha }\left[ L_{f}+\frac{T}{2}L_{k}\right] 
}, \text{ } \forall t\in I.
\end{equation}
\end{theorem}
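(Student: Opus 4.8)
The plan is to reproduce the fixed-point scheme of Theorem \ref{teo2}, now in its Hyers--Ulam (constant majorant) form obtained by taking the weight function to be the constant $\varepsilon$. First I would work in the space $X$ of all real valued continuous functions on $I$ equipped with the generalized metric
\[
d(v,w)=\inf\{c\in[0,\infty]:|v(t)-w(t)|\leq c,\ \forall t\in I\},
\]
which is the specialization of Eq.(\ref{eq11}) to a constant weight and is a complete generalized metric space. On $X$ I would consider the same operator $\Omega$ of Eq.(\ref{eq12}); its fixed points are exactly the continuous solutions of the integral equation Eq.(\ref{eq9}), which by Theorem 5 of \cite{Ze1} coincide with the solutions of the initial value problem Eq.(\ref{eq1}). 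The whole argument then reduces to verifying the three hypotheses of Theorem \ref{teo1} and reading off the constant.

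The core step is the contraction estimate. Using the Lipschitz conditions Eq.(\ref{eq5}) and Eq.(\ref{eq6}) together with the elementary identity
\[
I_{0+}^{\alpha;\psi}[1](t)=\frac{1}{\Gamma(\alpha)}\int_0^t\psi'(\xi)(\psi(t)-\psi(\xi))^{\alpha-1}\,d\xi=\frac{(\psi(t)-\psi(0))^{\alpha}}{\Gamma(\alpha+1)},
\]
I would bound the $f$-contribution of $|\Omega v(t)-\Omega w(t)|$ by $\frac{(\psi(T)-\psi(0))^{\alpha}}{\Gamma(\alpha+1)}L_f\,d(v,w)$. The $k$-contribution is the delicate one: after $|k(t,s,v)-k(t,s,w)|\leq L_k|v(s)-w(s)|\leq L_k\,d(v,w)$ one is left with the iterated expression $I_{0+}^{\alpha;\psi}\big[\int_0^{\xi}ds\big]$, which I would estimate so as to produce the factor $\frac{T}{2}$, giving $\frac{(\psi(T)-\psi(0))^{\alpha}}{\Gamma(\alpha+1)}\frac{T}{2}L_k\,d(v,w)$. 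Taking the supremum over $t\in I$ then yields
\[
d(\Omega v,\Omega w)\leq K\,d(v,w),\qquad K:=\frac{(\psi(T)-\psi(0))^{\alpha}}{\Gamma(\alpha+1)}\Big[L_f+\frac{T}{2}L_k\Big],
\]
and I would invoke the standing assumption (equivalently, positivity of the denominator in Eq.(\ref{jose})) to guarantee $K<1$, so that $\Omega$ is strictly contractive.

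I would then dispatch the remaining hypotheses of Theorem \ref{teo1} exactly as in Theorem \ref{teo2}: for an arbitrary $w_0\in X$ the functions $f(\cdot,w_0)$, $k(\cdot,\cdot,w_0)$ and $w_0$ are bounded on the compact interval $I$, so $|\Omega w_0(t)-w_0(t)|$ is bounded and $d(\Omega w_0,w_0)<\infty$. Theorem \ref{teo1}.1 then produces a fixed point $u_0=\lim_n\Omega^n w_0$, which is the continuous function satisfying Eq.(\ref{eq9}); boundedness of every $w\in X$ relative to $u_0$ shows $X^{\ast}=X$, so by Theorem \ref{teo1}.2 this $u_0$ is the unique such function.

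Finally, to obtain Eq.(\ref{jose}) I would feed the defect inequality into the metric. Using the equivalence between Eq.(\ref{eq1}) and Eq.(\ref{eq9}) (Theorem 5 of \cite{Ze1}) applied to the hypothesis $|{}^{H}\mathbb{D}_{0+}^{\alpha,\beta;\psi}u-f(\cdot,u)-\int_0^{\cdot}k\,ds|\leq\varepsilon$, I obtain
\[
|u(t)-\Omega u(t)|\leq I_{0+}^{\alpha;\psi}[\varepsilon](t)=\frac{(\psi(t)-\psi(0))^{\alpha}}{\Gamma(\alpha+1)}\varepsilon\leq\frac{(\psi(T)-\psi(0))^{\alpha}}{\Gamma(\alpha+1)}\varepsilon,
\]
whence $d(u,\Omega u)\leq\frac{(\psi(T)-\psi(0))^{\alpha}}{\Gamma(\alpha+1)}\varepsilon$. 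Theorem \ref{teo1}.3 gives $d(u,u_0)\leq\frac{1}{1-K}d(\Omega u,u)$, and substituting the values of $K$ and of $d(\Omega u,u)$ and clearing the common factor $\Gamma(\alpha+1)$ yields precisely the bound Eq.(\ref{jose}). The one genuine obstacle I foresee is the $k$-term estimate in the contraction step: converting the mixed $\psi$-fractional/ordinary iterated integral into a clean multiple of $d(v,w)$ with the asserted constant $\frac{T}{2}L_k$ is exactly where the computation must be carried out with care, since the crude bound $\int_0^{\xi}ds\leq T$ only delivers the weaker factor $T L_k$.
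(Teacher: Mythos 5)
Your proposal follows the paper's proof step for step: the same generalized metric with constant weight, the same operator $\Omega$, the same contraction constant $K=\frac{(\psi(T)-\psi(0))^{\alpha}}{\Gamma(\alpha+1)}\bigl[L_{f}+\frac{T}{2}L_{k}\bigr]$, the same verification that $d(\Omega w_{0},w_{0})<\infty$, and the same application of Theorem \ref{teo1}.1--3 to extract existence, uniqueness and the final bound. In that sense the route is identical to the paper's.

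The one step you flag as an obstacle --- producing the factor $\frac{T}{2}$ in the $k$-term --- is a genuine gap, and it is worth saying plainly that the paper does not close it either: it simply asserts the bound. Your suspicion is correct that it cannot be closed as stated. After applying the Lipschitz condition, the $k$-contribution is
\[
\frac{L_{k}C_{vw}}{\Gamma(\alpha)}\int_{0}^{t}\psi'(\xi)\,(\psi(t)-\psi(\xi))^{\alpha-1}\,\xi\,d\xi ,
\]
and for $\psi(t)=t$ this equals $\frac{t^{\alpha+1}}{\Gamma(\alpha+2)}L_{k}C_{vw}=\frac{t^{\alpha}}{\Gamma(\alpha+1)}\cdot\frac{t}{\alpha+1}\,L_{k}C_{vw}$. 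Since $0<\alpha<1$ forces $\frac{T}{\alpha+1}>\frac{T}{2}$, the asserted factor $\frac{T}{2}$ is strictly too small; the honest crude bound from $\int_{0}^{\xi}ds=\xi\leq T$ gives the factor $T$, exactly as you note. To make the argument close you should replace $\frac{T}{2}L_{k}$ by $TL_{k}$ throughout --- in the smallness hypothesis, in the contraction constant $K$, and in the denominator of Eq.(\ref{jose}) --- after which every step of your outline goes through. So your proposal, with that repair, is sound; the theorem's constant as printed is not justified by either your argument or the paper's.
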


\begin{proof}
Initially, let $X$ the set of all real-valued continuous functions on $I.$ Furthermore, we define a generalized metric on $X$ by 
\begin{equation}\label{eq18}
d\left( v,w\right) =\inf \left\{ C\in \left[ 0,\infty \right] /\text{ }
\left\vert v\left( t\right) -w\left( t\right) \right\vert \leq C,\text{ }%
\forall t\in I\right\}.
\end{equation}

It is easy to see that $\left( X,d\right) $ is a complete generalized metric
space \cite{sevgina}.

Now, we define the operator $\Omega :X\rightarrow X$ by 
\begin{equation}\label{eq19}
\Omega v\left( t\right) =\frac{\left( \psi \left( t\right) -\psi \left(
0\right) \right) ^{\gamma -1}}{\Gamma \left( \gamma \right) }\sigma
+I_{0+}^{\alpha ;\psi }f\left( t,v\left( t\right) \right) +I_{0+}^{\alpha
;\psi }\left[ \int_{0}^{\xi }k\left( t,s,v\left( s\right) \right) ds\right] ,
\end{equation}
$\forall t\in I,$ for all $v\in I$.

We now prove that $\Omega $ is strictly contractive on the generalized
metric space $X.$ For any $v,w\in X,$ let $C_{vw}\in \left[ 0,\infty \right]$
be an arbitrary constant with $d\left( h,g\right) \leq C_{hg},$ that is, let
us suppose that 
\begin{equation}\label{eq20}
\left\vert v\left( t\right) -w\left( t\right) \right\vert \leq C_{vw},\text{ 
}\forall t\in I.
\end{equation}

Using the Eq.(\ref{eq5}), Eq.(\ref{eq6}), Eq.(\ref{eq19}) and Eq.(\ref{eq20}%
), we deduce 
\begin{eqnarray*}
&&\left\vert \Omega v\left( t\right) -\Omega w\left( t\right) \right\vert  \\
&\leq &\frac{L_{f}C_{vw}}{\Gamma \left( \alpha \right) }\int_{0}^{t}\psi
^{\prime }\left( \xi \right) \left( \psi \left( t\right) -\psi \left( \xi
\right) \right) ^{\alpha -1}d\xi +\frac{L_{k}C_{vw}}{\Gamma \left( \alpha
\right) }\int_{0}^{t}\psi ^{\prime }\left( \xi \right) \psi \left( t\right)
-\psi \left( \xi \right) z^{\alpha -1}\int_{0}^{\xi }d\xi  \\
&\leq &C_{vw}\left[ \frac{L_{f}\left( \psi \left( T\right) -\psi \left(
0\right) \right) ^{\alpha }}{\Gamma \left( \alpha +1\right) }+L_{k}\frac{T}{2%
}\frac{\left( \psi \left( T\right) -\psi \left( 0\right) \right) ^{\alpha }}{%
\Gamma \left( \alpha +1\right) }\right] ,\text{ }\forall t\in I.
\end{eqnarray*}

We conclude that 
\begin{equation*}
d\left( \Omega v,\Omega w\right) \leq \left( L_{f}\frac{\left( \psi \left(
T\right) -\psi \left( 0\right) \right) ^{\alpha }}{\Gamma \left( \alpha
+1\right) }+L_{k}\frac{T}{2}\frac{L_{f}\left( \psi \left( T\right) -\psi
\left( 0\right) \right) ^{\alpha }}{\Gamma \left( \alpha +1\right) }\right)
d\left( v,w\right),\text{ } \forall v,w\in X.
\end{equation*}

Let $w_{0}$ any arbitrary element in $X.$ Then there exists a constant $0<C<\infty $ 
\begin{eqnarray*}
&&\left\vert \Omega w_{0}\left( t\right) -w_{0}\left( t\right) \right\vert  
\notag \\
&=&\left\vert \dfrac{\left( \psi \left( t\right) -\psi \left( 0\right)
\right) ^{\gamma -1}}{\Gamma \left( \gamma \right) }\sigma +I_{0+}^{\alpha
;\psi }f\left( t,w_{0}\left( t\right) \right) +I_{0+}^{\alpha ;\psi }\left[
\int_{0}^{\xi }k\left( t,s,w_{0}\left( s\right) \right) ds\right]
-w_{0}\left( t\right) \right\vert \leq C,
\end{eqnarray*}
$\forall t\in I$.

Since $\ f\left( \xi ,w_{0}\left( \xi \right) \right) ,$ $k\left( t,s,w_{0}\left( s\right) \right) $ and $w_{0}\left( t\right) $ are bounded on their domain, Eq.(\ref{eq18}) implies that $d\left( \Omega
w_{0},w_{0}\right) <\infty .$

Therefore, according to Theorem \ref{teo1}.1, there exists a continuous
function $u_{0}:I\rightarrow \mathbb{R} $ such that $\Omega
^{n}w_{0}\rightarrow u_{0}$ in $\left( X,d\right) $ as $n\rightarrow \infty $
and such that $\Omega u_{0}=u_{0},$ that is, $u_{0}$ satisfies Eq.(%
\ref{eq9}) for every $t\in I$. As in the proof of Theorem \ref{teo2} it can
be verify that $\left\{ w\in X/\text{ }d\left( w_{0},w\right) <\infty
\right\} =X.$ Due to Theorem \ref{teo1}.2 $u_{0}$ given by Eq.(\ref{eq9}),
is the unique continuous function.

From Eq.(\ref{eq7}) and by Theorem 5 \cite{Ze1}, we have
\begin{eqnarray*}
&&\left\vert u\left( t\right) -\frac{\left( \psi \left( t\right) -\psi
\left( 0\right) \right) ^{\gamma -1}}{\Gamma \left( \gamma \right) }\sigma
-I_{0+}^{\alpha ;\psi }f\left( t,u_{0}\left( t\right) \right)
-I_{0+}^{\alpha ;\psi }\left[ \int_{0}^{\xi }k\left( t,s,u_{0}\left(
s\right) \right) ds\right] \right\vert  \\
&\leq &\varepsilon \frac{\left( \psi \left( T\right) -\psi \left( 0\right)
\right) ^{\alpha }}{\Gamma \left( \alpha +1\right) },
\end{eqnarray*}
for all $t\in I$, which implies
\begin{equation*}
d\left( u,\Omega u\right) \leq \varepsilon \frac{\left( \psi \left( T\right)
-\psi \left( 0\right) \right) ^{\alpha }}{\Gamma \left( \alpha +1\right) }.
\end{equation*}

Lastly, Theorem \ref{teo1}.3 together with Eq.(\ref{eq11}) implies that 
\begin{equation*}
\left\vert u\left( t\right) -u_{0}\left( t\right) \right\vert \leq \frac{%
\left( \psi \left( T\right) -\psi \left( 0\right) \right) ^{\alpha
}\varepsilon }{\Gamma \left( \alpha +1\right) -\left( \psi \left( T\right)
-\psi \left( 0\right) \right) ^{\alpha }\left[ L_{f}+\frac{T}{2}L_{k}\right] 
}
\end{equation*}
that is the inequality Eq.(\ref{jose}) be true for all $\ t\in I$.
\end{proof}

\bibliography{ref}
\bibliographystyle{plain}

\end{document}